\documentclass[runningheads]{llncs}
\usepackage[utf8]{inputenc}
\usepackage{graphicx}
\usepackage{hyperref}
\usepackage{amssymb}
\usepackage{amsmath}
\usepackage{array}
\usepackage{color}
\usepackage{tkz-graph}
\usepackage{subfigure}

\graphicspath{{images/}}


\begin{document}

\title{Max-plus Operators Applied to Filter Selection and Model
  Pruning in Neural Networks}

\titlerunning{Max-plus Operators Applied to Filter Selection and Model
  Pruning}

\author{Yunxiang Zhang\inst{1,2,3} \and Samy Blusseau\inst{3} \and
  Santiago Velasco-Forero\inst{3} \and Isabelle Bloch\inst{2} \and
  Jesús Angulo\inst{3}}

\authorrunning{Y. Zhang, S. Blusseau, S. Velasco-Forero, I. Bloch, J. Angulo}

\institute{Ecole Polytechnique, 91128 Palaiseau, France \and LTCI,
  Télécom ParisTech, Université Paris-Saclay, Paris, France \and
  Centre for Mathematical Morphology, Mines ParisTech, PSL Research
  University, France}

\maketitle              


\begin{abstract}
  Following recent advances in morphological neural networks, we
  propose to study in more depth how Max-plus operators can be
  exploited to define morphological units and how they behave when
  incorporated in layers of conventional neural networks. Besides
  showing that they can be easily implemented with modern machine
  learning frameworks, we confirm and extend the observation that a
  Max-plus layer can be used to select important filters and reduce
  redundancy in its previous layer, without incurring performance
  loss. Experimental results demonstrate that the filter selection
  strategy enabled by a Max-plus layer is highly efficient and robust,
  through which we successfully performed model pruning on two neural
  network architectures. We also point out that there is a close
  connection between Maxout networks and our pruned Max-plus networks
  by comparing their respective characteristics. The code for
  reproducing our experiments is available online\footnote{For code
    release, please visit https://github.com/yunxiangzhang.}.

\keywords{Mathematical morphology \and Morphological neural networks~\and Max-plus operator \and Deep learning \and Filter Selection \and Model Pruning.}
\end{abstract}


\section{Introduction}

During the previous era of high interest in artificial neural
networks, in the late 1980s, morphological
networks~\cite{davidson1990theory,wilson1989morphological} were
introduced merely as an alternative to their linear counterparts. In a
nutshell, they consist in changing the elementary neural operation
from the usual scalar product to dilations, erosions and more general
rank filters. Besides the practical achievements in this research
line, which reached state-of-the-art results at its
time~\cite{pessoa2000neural}, it questioned the main practices in the
field of artificial neural networks on crucial topics such as
architectures and optimization methods, improving their understanding.

After the huge technological leap taken by deep neural networks in the
last decade, understanding them remains challenging and insights can
still be brought by testing alternatives to the most popular
practices. However, since the emergence of highly efficient deep
learning frameworks (Caffe, Torch, TensorFlow, \textit{etc.}),
morphological neural networks have been very little
re-investigated~\cite{Maxplus,mondal2019dense}. Motivated by the
promising results in~\cite{Maxplus}, this paper proposes to extend and
deepen the study on morphological neural networks with Max-plus
layers. More precisely, we aim to validate and exploit a specific
property, namely that Max-plus layer (\textit{i.e.} a dilation layer)
following a conventional linear layer tends to select a reduced number
of filters from the latter, making the others useless.

Our findings and contributions are three-fold. First, we propose an
efficient training framework for morphological neural networks with
Max-plus blocks, and theoretically demonstrate that they are universal
function approximators under mild conditions. Secondly, we perform
extensive experiments and visualization analysis to validate the
robustness and effectiveness of the filter selection property provided
by Max-plus blocks. Thirdly, we successfully applied the resulting
Max-plus blocks to the task of model pruning on different neural
network architectures. Related work is briefly summarized in
Section~\ref{sec:biblio}. Then we show how to introduce Max-plus
operators in neural networks in
Section~\ref{sec:maxplus}. Experimental results are discussed in
Section~\ref{sec:results}.


\section{Related Work}
\label{sec:biblio}

\textbf{Morphological neural networks} were defined almost
simultaneously in two different ways at the end of the
1980s~\cite{davidson1990theory,wilson1989morphological}. Davidson~\cite{davidson1990theory}
introduced neural units that can be seen as pure dilations or
erosions, whereas Wilson~\cite{wilson1989morphological} focused on a
more general formulation based on rank filters, in which $\max$ and
$\min$ operators are two particular cases.  Davidson's definition
interprets morphological neurons as bounding boxes in the feature
space~\cite{Ritter96,Ritter03,Sussner11}. In the latter studies,
networks were trained to perform perfectly on training sets after few
iterations, but little attention was drawn to generalization. Only
recently, a backpropagation-based algorithm was adopted and improved
constructive ones~\cite{zamora2017dendrite}. Still, the
``bounding-box'' approach does not seem to generalize well to test set
when faced with high-dimensional problems like image analysis.

Wilson's idea, on the other hand, inspired hybrid linear/rank filter
architectures, which were trained by gradient descent~\cite{Pessoa98}
and backpropagation~\cite{pessoa2000neural}. In this case, the
geometrical interpretation of decision surfaces becomes much richer,
and the resulting framework was successfully applied to an image
classification problem. The previously mentioned study~\cite{Maxplus}
is one of the latest in this area, introducing a hybrid architecture
that experimentally shows an interesting property on network pruning.
In this classification experiment~\cite{Maxplus}, each Max-plus unit
shows, after training, one or two large weight values compared to the
others. At the inference stage, this non-uniform distribution of
weight values induces a selection of important filters in the previous
layer, whereas the other filters (the majority) are no longer used in
the subsequent classification task. Therefore, after removal of the
redundant filters, the network behaves exactly as it did before
pruning. In this paper, we focus on the exploration of this property
and show that it becomes more stable and effective provided that a
proper regularization is applied.\\
\textbf{Model pruning} stands for a family of algorithms that explore
the redundancy in model parameters and remove unimportant ones while
not compromising the performance. Rapidly increased computational and
storage requirements for deep neural networks in recent years have
largely motivated research interests in this domain. Early works on
model pruning dates back to~\cite{hassibi1993,lecun1990}, which prune
model parameters based on the Hessian of the loss function. More
recently, the model pruning problem was addressed by first dropping
the neuronal connections with insignificant weight value and then
fine-tuning the resulting sparse network~\cite{han2015a}. This
technique was later integrated into the Deep Compression
framework~\cite{han2015b} to achieve even higher compression rate. The
HashNets model~\cite{chen2015} randomly groups parameters into hash
buckets using a low-cost hash function and performs model compression
\textit{via} parameter sharing. While previous methods achieved
impressive performance in terms of compression rate, one notable
disadvantage of these non-structured pruning algorithms lies in the
sparsity of the resulting weight matrices, which cannot lead to
speedup without dedicated hardwares or libraries.

Various structured pruning methods were proposed to overcome this
subtlety in practical applications by pruning at the level of channels
or even layers. Filters with smaller $L_{1}$ norm were pruned based on
a predefined pruning ratio for each layer in~\cite{li2016}. Model
pruning was transformed into an optimization problem in~\cite{luo2017}
and the channels to remove were determined by minimizing next layer
reconstruction error. A branch of algorithms in this category employs
$L_{p}$ regularization to induce shrinkage and sparsity in model
parameters. Sparsity constraints were imposed in~\cite{liu2017} on
channel-wise scaling factors and pruning was based on their magnitude,
while in~\cite{wen2016} group-sparsity was leverage to learn compact
CNNs via a combination of $L_{1}$ and $L_{2}$ regularization. One
minor drawback of these regularization-based methods is that the
training phase generally requires more iterations to converge. Our
approach also falls into the structured pruning category and thus no
dedicated hardware or libraries are required to achieve speedup, yet
no regularization is imposed during model training. Moreover, in
contrast to most existing pruning algorithms, our method does not need
fine-tuning to regain performance.


\section{Max-plus Operator as a Morphological Unit}
\label{sec:maxplus}


\subsection{Morphological Perceptron}
\label{subsection31}

In traditional literature on machine learning and neural networks, a
perceptron~\cite{MLP} is defined as a linear computational unit,
possibly followed by a non-linear activation function. Among all
popular choices of activation functions, such as logistic function,
hyperbolic tangent function and rectified linear unit (ReLU) function,
ReLU~\cite{ReLU} generally achieves better performance due to its
simple formulation and non-saturating property. Instead of
multiplication and addition, the morphological perceptron employs
addition and maximum, which results in a non-linear computational
unit. A simplified version~\cite{Maxplus} of the initial
formulation~\cite{davidson1990theory,Ritter96} is defined as follows.

\begin{definition}
  \textbf{\textup{(Morphological Perceptron).}}  Given an input vector
  $\mathbf{x} \in \mathbb{R}_{max}^{n}$ (with
  $\mathbb{R}_{max} = \mathbb{R} \cup \left\{-\infty\right\}$), a
  weight vector $\mathbf{w} \in \mathbb{R}_{max}^{n}$, and a bias
  $\mathbf{b} \in \mathbb{R}_{max}$, the morphological perceptron
  computes its activation as:
\begin{equation} 
  a(\mathbf{x}) = \mathop{\max}\left\{\mathbf{b}, \mathop{\max}_{i \in \left\{1, ... , n\right\}} \left\{\mathbf{x}_{i} + \mathbf{w}_{i}\right\}\right\}
  \label{eq:morph_perceptron}
\end{equation}
where $\mathbf{x}_i$ (resp. $\mathbf{w}_i$) denotes the $i$-th
component of $\mathbf{x}$ (resp. $\mathbf{w}$).
\end{definition}

This model may also be referred to as $(\max, +)$ perceptron since it
relies on the $(\max, +)$ semi-ring with underlying set
$\mathbb{R}_{max}$. It is a dilation on the complete lattice
$\big((\mathbb{R}\cup \lbrace\pm\infty\rbrace)^n, \leq_n\big)$ with
$\leq_n$ the Pareto ordering.

\subsection{Max-plus Block}
\label{subsection32}

Based on the formulation of the morphological perceptron, we define
the Max-plus block as a standalone module that combines a
fully-connected layer (or convolutional layer) with a Max-plus
layer~\cite{Maxplus}. Let us denote the input vector of the
fully-connected layer\footnote{This formulation can be easily
  generalized to the case of convolutional layers.}, the input and
output vectors of the Max-plus layer respectively by $\mathbf{x}$,
$\mathbf{y}$ and $\mathbf{z}$, whose components are indexed by
$i \in \left\{1, ... , I\right\}$, $j \in \left\{1, ... , J\right\}$
and $k \in \left\{1, ... , K\right\}$, respectively. The corresponding
weight matrices are denoted by
$\mathbf{w}^{f} \in \mathbb{R}_{max}^{I \times J}$ and
$\mathbf{w}^{m} \in \mathbb{R}_{max}^{J \times K}$. Then the operation
performed in this Max-plus block is (see Figure~\ref{fig:proof}):
\begin{equation} 
\begin{array}{rcl}
  \mathbf{y}_{j} & = & \sum_{i \in \left\{1, ... , I\right\}} \mathbf{x}_{i} \cdot \mathbf{w}_{ij}^{f}  \\
  \mathbf{z}_{k} & = & \mathop{\max}_{j \in \left\{1, ... , J\right\}} \left\{\mathbf{y}_{j} + \mathbf{w}_{jk}^{m}\right\}
\end{array}
\label{eq:maxplus1}
\end{equation}
Note that the bias vector of the fully-connected layer (convolutional
layer) is removed in our formulation, since its effect overlaps with
that of the weight matrix $\mathbf{w}^{m}$. In addition, the bias
vector of the Max-plus layer is shown to be ineffective in practice
and is therefore not used here.


\subsection{Universal Function Approximator Property}
\label{subsection33}

The result presented here is very similar to the approximation theorem
on Maxout networks\footnote{Note that the classical universal
  approximation theorems for neural networks (see for
  example~\cite{hornik1989multilayer}) do not hold for networks
  containing max-plus units.}~\cite{Maxout}, based on Wang's
work~\cite{wang2004general}. As shown in~\cite{Maxout}, Maxout
networks with enough affine components in each Maxout unit are
universal function approximators. Recall that a model is called a
universal function approximator if it can approximate arbitrarily well
any continuous function provided enough capacity. Similarly, provided
that the input vector (or input feature maps)
$\mathbf{y} \in \mathbb{R}_{max}^{J}$ of the Max-plus layer may have
arbitrarily many affine components (or affine feature maps), we show
that a Max-plus model with just two output units in its Max-plus block
can approximate arbitrarily well any continuous function of the input
vector (or input feature maps) $\mathbf{x} \in \mathbb{R}^{I}$ of the
block on a compact domain. A diagram illustrating the basic idea of
the proof is shown in Figure \ref{fig:proof}.

\begin{figure}[htbp]
\centering
\includegraphics[width=0.5\textwidth]{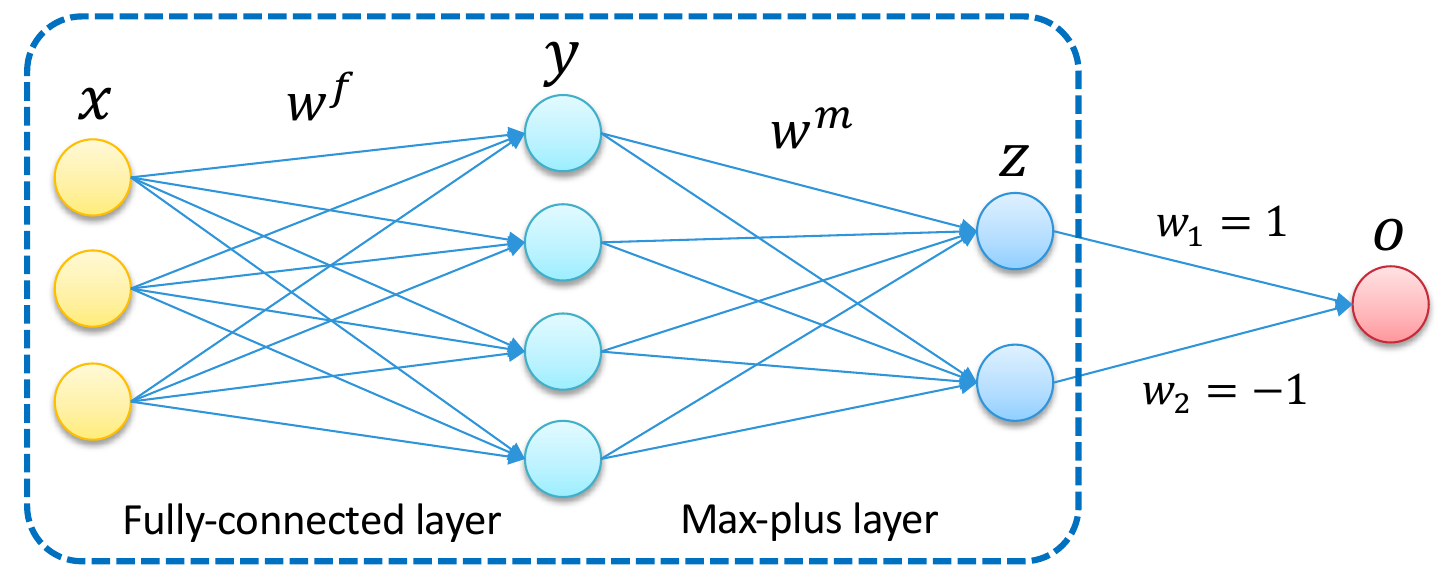}
\caption{A Max-plus block with two output units, applied to an input vector $x$.} 
\label{fig:proof}
\end{figure}

\begin{theorem}
  \textbf{\textup{(Universal function approximator).}}  A Max-plus
  model with two output units in its Max-plus block can approximate
  arbitrarily well any continuous function of the input of the block
  on a compact domain.
\end{theorem}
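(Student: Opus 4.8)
The plan is to exploit the structural identity between a single Max-plus output unit and a Maxout unit, and then to reduce the statement to the difference-of-convex decomposition of piecewise-linear functions that underlies the Maxout approximation theorem~\cite{Maxout,wang2004general}. First I would observe that, viewed as a function of $\mathbf{x} \in \mathbb{R}^{I}$, each output component of the block is
\begin{equation}
  \mathbf{z}_{k} = \mathop{\max}_{j \in \left\{1, \dots , J\right\}} \Big\{ \sum_{i} \mathbf{x}_{i} \mathbf{w}_{ij}^{f} + \mathbf{w}_{jk}^{m} \Big\},
\end{equation}
which is a pointwise maximum of $J$ affine functions of $\mathbf{x}$: the linear part of the $j$-th term is the $j$-th column of $\mathbf{w}^{f}$ and its offset is $\mathbf{w}_{jk}^{m}$. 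Hence each $\mathbf{z}_{k}$ is a convex piecewise-linear function of $\mathbf{x}$ and is exactly a Maxout unit. Since the block has two output units, the model forms the difference $\mathbf{z}_{1} - \mathbf{z}_{2}$ and therefore computes an arbitrary difference of two convex piecewise-linear functions.

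Next I would import two classical facts. On the one hand, any continuous function on a compact domain can be uniformly approximated by a continuous piecewise-linear function, by uniform continuity together with a sufficiently fine triangulation of the domain. On the other hand, the key result behind the Maxout approximation theorem, due to Wang~\cite{wang2004general}, states that every continuous piecewise-linear function $\mathbb{R}^{I} \to \mathbb{R}$ can be written as a difference $h_{1} - h_{2}$ of two convex piecewise-linear functions. Each $h_{k}$ is in turn a finite maximum of affine functions, $h_{k}(\mathbf{x}) = \max_{\ell} \left\{ \mathbf{a}_{\ell}^{(k)} \cdot \mathbf{x} + b_{\ell}^{(k)} \right\}$, which is precisely the form a single output unit realizes.

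The remaining and most delicate step is to realize both convex pieces \emph{simultaneously} inside one block, given that the two output units share the same affine features $\mathbf{y}_{j}$. I would resolve this by letting $J$ equal the total number of affine pieces appearing in $h_{1}$ and $h_{2}$ together and setting the columns of $\mathbf{w}^{f}$ to the corresponding linear parts $\mathbf{a}_{\ell}^{(k)}$. For the Max-plus weights, the offset $\mathbf{w}_{jk}^{m}$ is set to the bias $b_{\ell}^{(k)}$ when feature $j$ belongs to $h_{k}$, and to $-\infty$ otherwise; this is admissible since the weights live in $\mathbb{R}_{max} = \mathbb{R} \cup \left\{-\infty\right\}$ (and in a finite implementation any value negative enough that the term never attains the maximum on the compact domain suffices). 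With this masking, $\mathbf{z}_{1}$ and $\mathbf{z}_{2}$ reproduce $h_{1}$ and $h_{2}$ exactly, so $\mathbf{z}_{1} - \mathbf{z}_{2}$ equals the chosen piecewise-linear approximant, and hence approximates the target continuous function arbitrarily well.

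The main obstacle is therefore not the analytic content, which is delegated to~\cite{wang2004general}, but the bookkeeping imposed by the shared affine features: the two output units cannot use independent feature banks, and one must argue that pooling all affine pieces and masking with $-\infty$ offsets does not corrupt either maximum. Once this masking argument is in place, the universal approximation claim follows directly, in close parallel with the two-unit Maxout construction.
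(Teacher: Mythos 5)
Your proposal is correct and follows essentially the same route as the paper's own proof: both reduce the statement to the Maxout results (PWL approximation on a compact domain plus Wang's difference-of-two-convex-PWL decomposition), then realize the two convex parts simultaneously by pooling all affine pieces into the fully-connected layer and masking the irrelevant ones with $-\infty$ entries in the rows of $\mathbf{w}^{m}$. The only additions beyond the paper's sketch are your explicit remarks that the shared feature bank is the delicate point and that a sufficiently negative finite value suffices in practice, which are consistent with, not divergent from, the paper's construction.
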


\begin{proof}
  We provide here a sketch of the proof, which follows very closely
  the one of Theorem 4.3 in~\cite{Maxout}. By Proposition 4.2
  in~\cite{Maxout}, any continuous function defined on a compact
  domain $C \subset \mathbb{R}^{I}$ can be approximated arbitrarily
  well by a piecewise linear (PWL) continuous function $g$, composed
  of $k$ affine regions. By Proposition 4.1 in~\cite{Maxout}, there
  exist two matrices $W_1, W_2 \in \mathbb{R}^{k\times I}$ and two
  vectors $b_1, b_2\in\mathbb{R}^k$ such that
\begin{equation}
  \label{eq:affine-approx}
  \forall x\in C, \; g(x) = \max_{1\leq j\leq k}\lbrace W_{1j}x + b_{1j}\rbrace - \max_{1\leq j\leq k}\lbrace W_{2j}x + b_{2j}\rbrace,
\end{equation}
where $W_{ij}$ is the $j$-th row of matrix $W_i$ and $b_{ij}$ the
$j$-th coefficient of $b_i$, $i=1,2$. Now,
Equation~\ref{eq:affine-approx} is the output of the Max-plus block of
Figure~\ref{fig:proof}, provided
$\mathbf{w}^{f} = [W_1; W_2] \in \mathbb{R}^{2k\times I}$ is the
matrix of the fully connected layer,
$\mathbf{w}_1^{m} = [b_1^T, -\infty, \dots, -\infty]\in
\mathbb{R}_{max}^{2k}$ and
$\mathbf{w}_2^{m} = [-\infty, \dots, -\infty, b_2^T]\in
\mathbb{R}_{max}^{2k}$ the two rows of $\mathbf{w}^{m}$. This
concludes the proof.
\end{proof}


\section{Experiments}
\label{sec:results}

In this section, we present experimental results for our Max-plus
blocks by integrating them in different types of neural networks. All
neural network models shown in this section are implemented with the
open-source machine learning library TensorFlow~\cite{TF} and trained
on benchmark datasets MNIST~\cite{CNN} or CIFAR-10~\cite{CIFAR}
depending on the model complexity and capability.


\subsection{Filter Selection Property}
\label{subsection41}

In an attempt to reproduce and confirm the experimental results
reported in~\cite{Maxplus}, we first implemented a simple Max-plus
model composed of a fully-connected layer with $J$ units followed by a
Max-plus layer with ten units, namely a Max-plus block in our
terminology, to perform image classification on MNIST dataset. In
contrast to the original formulation in~\cite{Maxplus}, the two bias
vectors are removed for practical concerns explained in
Section~\ref{subsection32}.

Table~\ref{tab:mnist} summarizes the classification accuracy achieved
by this simple model on the validation set of MNIST dataset for
different values of $J$ in columns 3 to 5. Note that all the
experiments contained in these 3 columns are conducted under the same
training setting (initial learning rate, learning rate decay steps,
batch size, optimizer, \textit{etc.}) except for parameter
initialization (each column corresponds to a different random
seed). The performance of the original model in~\cite{Maxplus} is
included in column 2 for comparison.  As shown in
Table~\ref{tab:mnist}, provided a proper initialization, our simple
Max-plus model generally achieves an improved performance compared to
the original model. More interestingly, through horizontal comparison
across different runs, we find that the performance of this naive
Max-plus model is highly sensitive to parameter initialization.

\begin{table}[htbp]
\centering
\caption{Classification accuracy on the validation set of MNIST dataset with three different random seeds.}
\label{tab:mnist}
{\small\begin{tabular}{|c|c|c|c|c|c|c|c|}
\hline
\, $J$ units \, & Model \cite{Maxplus} & \multicolumn{3}{c|}{Max-plus} & \multicolumn{3}{c|}{Max-plus + dropout} \\
\hline
24 & \hspace{0.2cm} 84.3\% \hspace{0.2cm} & \hspace{0.2cm} 76.7\% \hspace{0.2cm} & \hspace{0.2cm} \textbf{84.4\%} \hspace{0.2cm} & \hspace{0.2cm} 76.0\% \hspace{0.2cm} & \hspace{0.2cm} 93.7\% \hspace{0.2cm} & \hspace{0.2cm} 93.7\% \hspace{0.2cm} & \hspace{0.2cm} \textbf{94.2\%} \hspace{0.2cm}\\
\hline
32 & 84.8\% & 84.5\% & 76.5\% & \textbf{94.0\%} & \textbf{94.6\%} & 94.2\% & 93.7\% \\
\hline
48 & 84.6\% & \textbf{94.0\%} & 93.8\% & 75.8\% & \textbf{94.6\%} & 94.5\% & 94.5\% \\
\hline
64 & 92.1\% & 85.4\% & 94.7\% & \textbf{94.9\%} & 94.8\% & \textbf{95.1\%} & 94.8\% \\
\hline
100 & -- & \textbf{94.8\%} & 85.3\% & 93.7\% & 94.8\% & \textbf{95.5\%} & 95.2\% \\
\hline
144 & -- & \textbf{95.1\%} & 85.8\% & 85.4\% & 95.3\% & 95.7\% & \textbf{95.9\%} \\
\hline
\end{tabular}
}
\end{table}

In order to gain more insight into this instability problem, we follow
the approach of~\cite{Maxplus} and visualize the weight matrix of the
Max-plus layer $\mathbf{w}^{m} \in \mathbb{R}_{max}^{J \times 10}$ by
splitting and reshaping it into 10 gray-scale images
$\mathbf{w}_{\cdot 1}^{m} \, ... \, \mathbf{w}_{\cdot i}^{m} \, ... \,
\mathbf{w}_{\cdot 10}^{m}$, each corresponding to a specific class
(Figure~\ref{fig:weights_filters}, left). In addition we also
visualized, as on the right hand side of
Figure~\ref{fig:weights_filters}, the ten linear filters from the
fully-connected layer that correspond to the maximum value of each
weight vector $\mathbf{w}_{\cdot i}^{m}$, defined as:
\begin{equation} {\rm Filter}_{i} = \mathbf{w}_{\cdot
    j_{\max}^{(i)}}^{f} \in \mathbb{R}_{max}^{I} \quad {\rm where}
  \hspace{0.2cm} j_{\max}^{(i)} = \mathop{\arg\max}_{j \in \left\{1,
      ... , J\right\}}\left\{\mathbf{w}_{ji}^{m}\right\}.
\label{eq:filters}
\end{equation}

Figure~\ref{fig:weights_filters} shows specifically the weights of the
Max-plus model that achieves an accuracy of $85.8\%$ with $J = 144$
units (fifth column and last row in Table~\ref{tab:mnist}). We notice
that there exists a severe filter-collision problem in the Max-plus
block, namely different output units select the same linear filter in
the fully-connected layer to compute their outputs. More specifically,
class $3$ and class $8$ share the same $argmax$ (highlighted by red
circles) in their weight vectors and consequently select the same
linear filter (visualized filters for class $3$ and class $8$ are the
same). This collision between output units directly leads to
classification confusion (because the Max-plus layer is the last layer
in this simple model) since the classes that employ the same linear
filter are completely indistinguishable. Furthermore, we only observed
this filter-collision problem in the experiments that achieve
relatively poor performance compared to the others (same model with
different initialization). This observation consistently verifies our
hypothesis that filter-collision is at the root of lower performance.

\begin{figure}[htbp]
\centering
\begin{minipage}[t]{0.49\linewidth}
\centering
\includegraphics[width=0.95\linewidth]{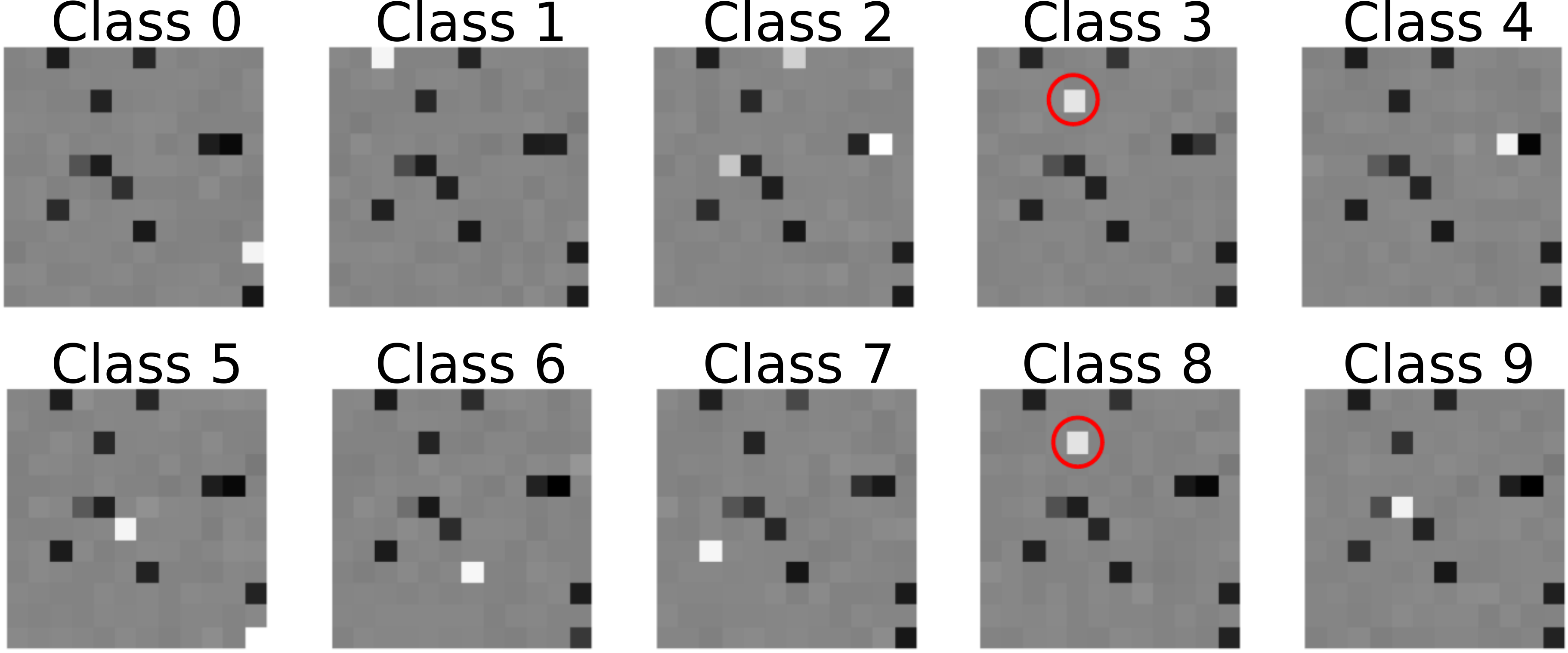}
\end{minipage}
\begin{minipage}[t]{0.49\linewidth}
\centering
\includegraphics[width=0.95\linewidth]{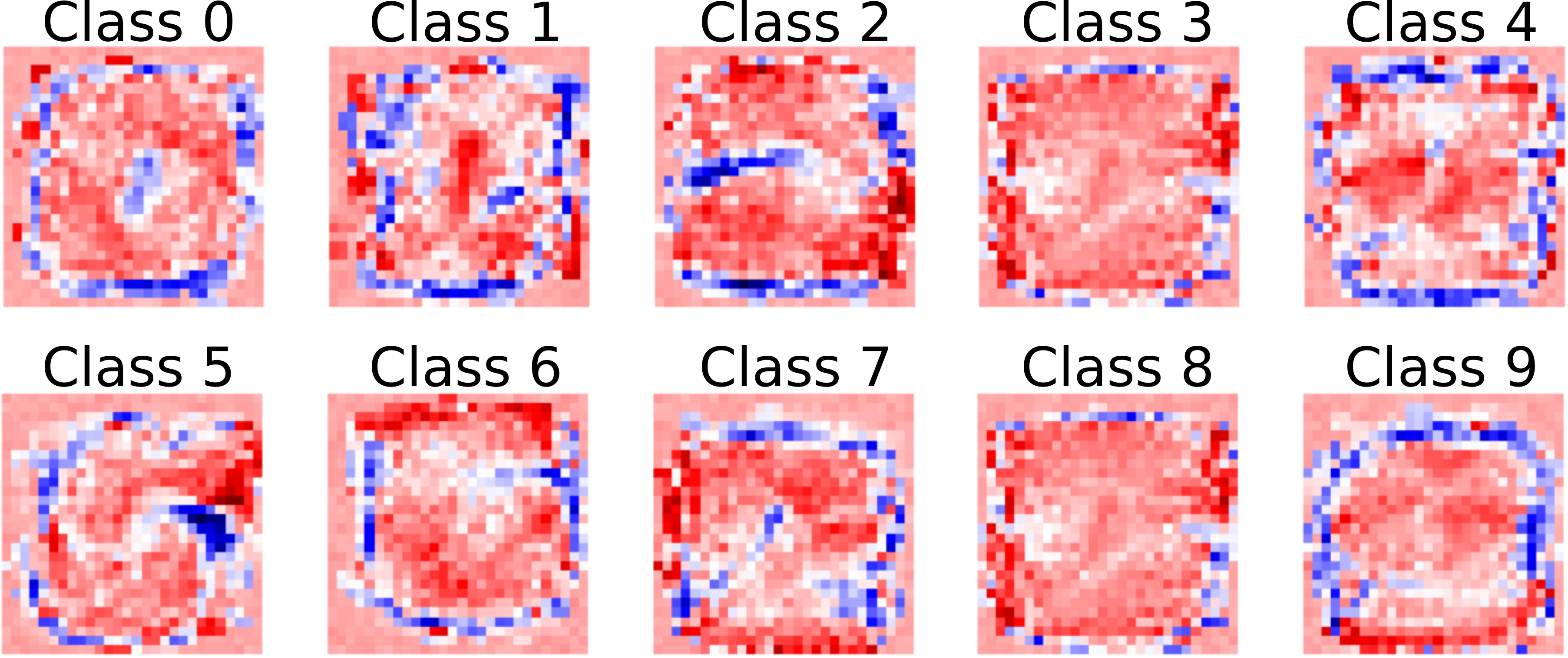}
\end{minipage}
\caption{Visualization of the weight matrix $\mathbf{w}^{m}$ (left)
  and the 10 linear filters that correspond to the maximum value of
  each weight vector $\mathbf{w}_{\cdot i}^{m}$ (right).}
\label{fig:weights_filters}
\end{figure}

In order to separate the output units that got stuck with the same
linear filter, we applied a dropout regularization
\cite{srivastava2014dropout} to randomly switch off the neuronal
connections between the fully-connected layer and the Max-plus layer
during training. Empirically, we found this approach highly effective,
as the performance of the dropout-regularized Max-plus model becomes
much less sensitive to parameter initialization. Table~\ref{tab:mnist}
shows that the Max-plus model with dropout regularization achieves
both a better performance and more stable results. We further validate
the effect provided by dropout regularization through an additional
experiment. With $J = 144$, we carried out 25 runs with different
initializations for different dropout ratios. As shown in
Figure~\ref{fig:MnistDropoutCurves}, dropout reduces dramatically the
variability across experiments. The interpretation of this improvement
is that dropout forces each class to use more than one linear filter
to represent its corresponding digit. This allows for a more general
representation and limits the risk of collisions between
classes. Interestingly, we observe a slight accuracy drop when dropout
ratio surpasses a certain level. This indicates that a trade-off
between stability and performance needs to be found, although a large
range of dropout values (between 25\% and 75\%) show robustness to
random seed values without penalizing the effectiveness on
classification accuracy.

\begin{figure}[htbp]
    \centering
    \includegraphics[width=1.\textwidth]{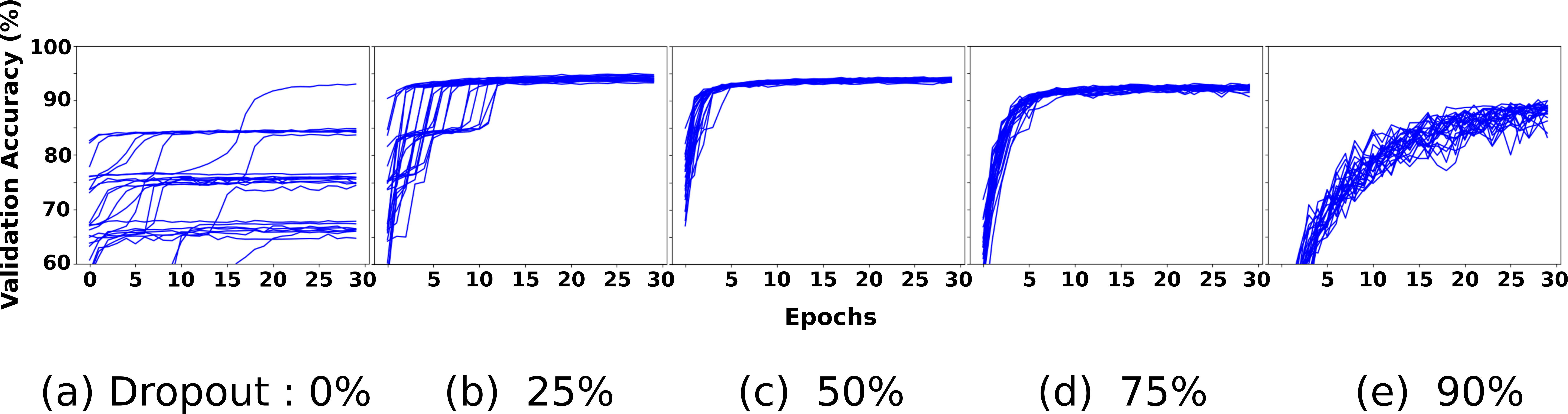}
    \caption{Classification accuracy per epochs for 25 runs with
      different random initialization and dropout ratios on MNIST
      validation set. Dropout values between 25\% and 75\% in (b-d)
      show robustness to random seed values without penalizing the
      effectiveness on classification accuracy.}
    \label{fig:MnistDropoutCurves}
\end{figure}
Whereas in general (with or without dropout) linear filters
$\mathbf{w}_{\cdot j^{(i)}}^{f}$ that correspond to large values of
$\mathbf{w}_{\cdot i}^{m}$ are similar to the images in
Figure~\ref{fig:weights_filters} (right), \textit{i.e.} digit-like
shape with high contrast, those corresponding to smaller values in the
weight matrix $\mathbf{w}^{m}$ are noisy or low-contrast images
showing the shape of a specific digit. This means that these filters
were activated by few training examples. Figure~\ref{fig:filters}
(left) shows several linear filters of this kind.
\begin{figure}[htbp]
\centering
\begin{minipage}[t]{0.49\linewidth}
\centering
\includegraphics[width=0.95\linewidth]{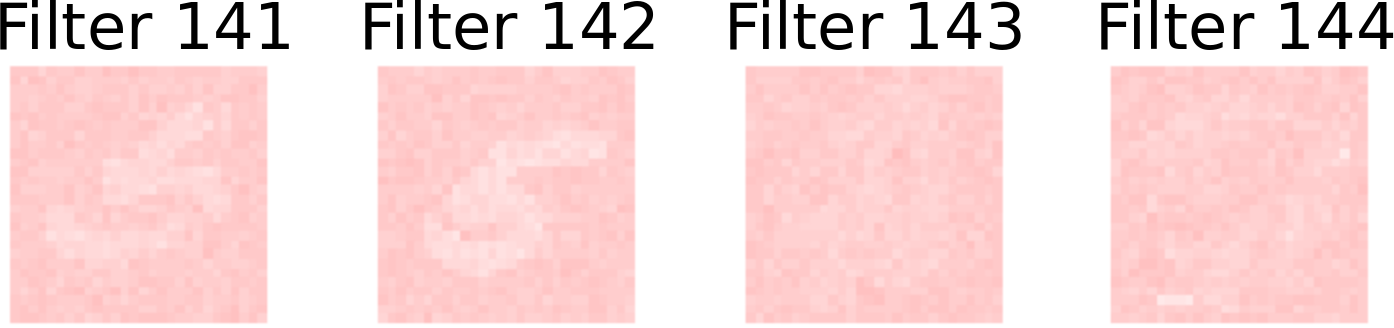}
\end{minipage}
\begin{minipage}[t]{0.49\linewidth}
\centering
\includegraphics[width=0.95\linewidth]{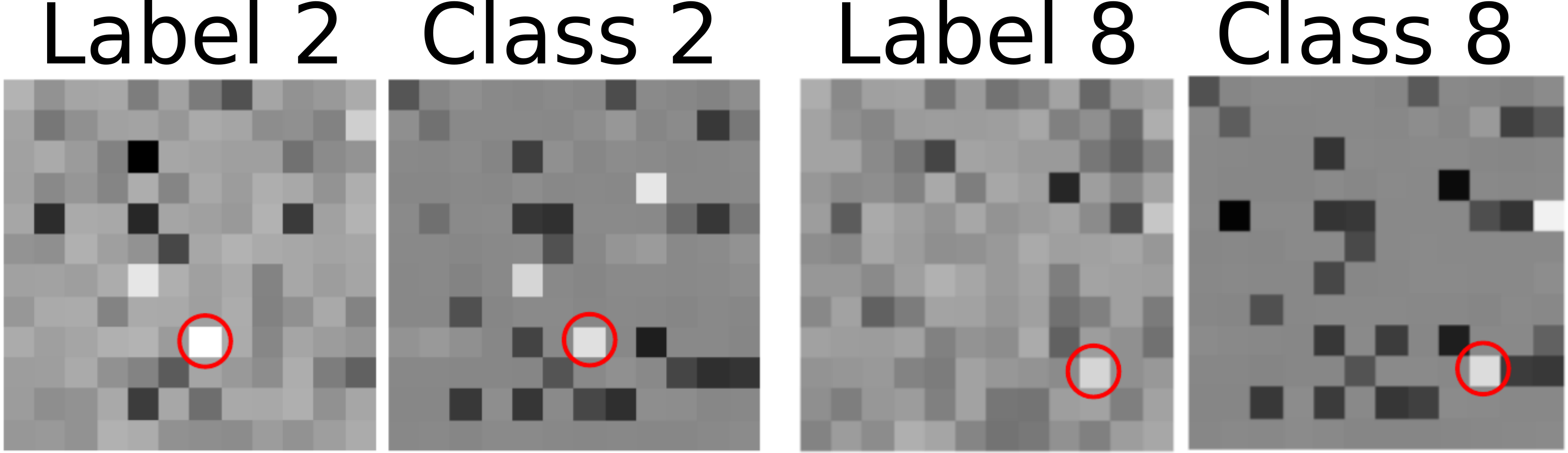}
\end{minipage}
\caption{Visualization of several linear filters corresponding to
  small values in $\mathbf{w}^{m}$ (left) and comparison between the
  activations of the fully-connected layer for two training examples
  and their corresponding weight vectors in $\mathbf{w}^{m}$ (right).}
\label{fig:filters}
\end{figure}
The visualization above suggests that large values in $\mathbf{w}^{m}$
correspond to linear filters of the fully-connected layer that contain
rich information for the subsequent classification task, and hence
that strongly respond to the samples of a specific class. If that
holds, then it is likely that only these filters shall achieve the
maximum in \eqref{eq:maxplus1}, and contribute to the classification
output. Therefore, we might be able to reduce the complexity of a
model while not degrading its performance by exploiting this filter
selection property of Max-plus layers.

In order to further validate the stability of this connection before
taking advantage of it, we also visualized the activations of the
fully-connected layer as gray-scale images for several training
examples and compare\footnote{For example, if a training example has
  label $i$, then we compare its activation vector
  $\mathbf{y} \in \mathbb{R}_{max}^{J}$ with the weight vector
  $\mathbf{w}_{\cdot (i+1)}^{m} \in \mathbb{R}_{max}^{J}$.} them to
the visualization of $\mathbf{w}^{m}$. As shown in Figure
~\ref{fig:filters} (right), there is a clear correspondence between
the maximum activation value of a training example with label $i$ and
the largest two or three values of the weight vector
$\mathbf{w}_{\cdot (i+1)}^{m}$, which indicates that the linear
filters corresponding to large values in the weight matrix
$\mathbf{w}^{m}$ are effectively used for the subsequent
classification task.


\subsection{Application to Model Pruning} 
\label{subsection42}

Now that our approach to filter selection via Max-plus layers is
proved to be quite effective and stable, we formalize our model
pruning strategy as follows: given a fixed threshold
$s \in \left[0, 1\right]$, for each weight vector
$\mathbf{w}_{\cdot (i+1)}^{m}$, we only keep the values that are
larger than
$s \times \mathop{\max}_{j \in \left\{1, ... , J\right\}}
\left\{\mathbf{w}_{j(i+1)}^{m}\right\} + \left(1 - s\right) \times
\mathop{\min}_{j \in \left\{1, ... , J\right\}}
\left\{\mathbf{w}_{j(i+1)}^{m}\right\}$ and the linear filters that
correspond to these retained values. Therefore, if in total $J_{r}$
linear filters are kept in the pruned model, then the remaining
parameters in the Max-plus layer no longer form a weight matrix but a
weight vector of size $J_{r}$, where each entry corresponds to a
linear filter in the fully-connected layer. The pruned fully-connected
layer and the pruned Max-plus layer combined together perform a
standard linear transformation followed by a maximum operation over
uneven groups. Note that the pruning process is conducted
independently for each output unit $\textbf{z}_{k}$ of the Max-plus
block ($1 \leq k \leq 10$), thus the number of retained linear filters
in the pruned model may vary from one class to another. From now on,
we shall call these selected linear filters by \emph{active filters}
and the others by \emph{non-active filters}. Figure~\ref{fig:pruned}
shows a graphical illustration of the comparison between the original
model and the pruned model.

\begin{figure}[htbp]
\centering
\includegraphics[width=0.8\linewidth]{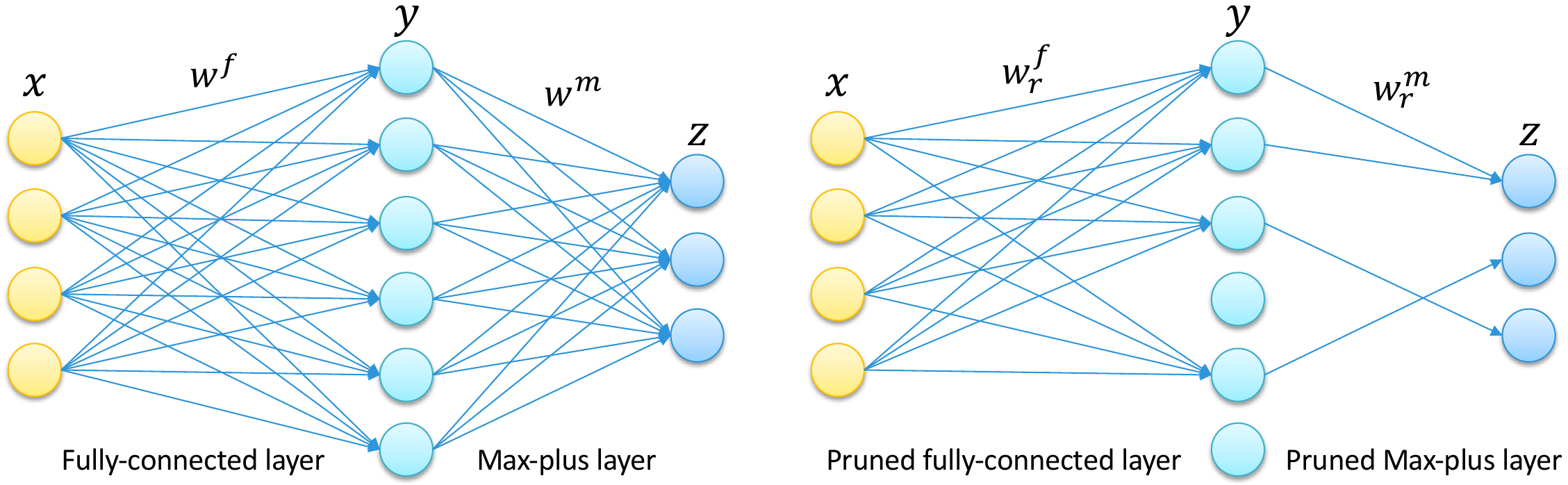}
\caption{Illustration of the comparison between the original Max-plus
  model (left) and the pruned Max-plus model (right). Here we have
  $\mathbf{w}_{r}^{f} \in \mathbb{R}_{max}^{I \times J_{r}}$ and
  $\mathbf{w}_{r}^{m} \in \mathbb{R}_{max}^{J_{r}}$.}
\label{fig:pruned}
\end{figure}

We tried different pruning levels on this simple Max-plus model by
varying the threshold $s$ and tested the pruned models on the
validation set and test set of MNIST dataset. We plotted the resulting
classification accuracy in function of the number of active filters in
Figure~\ref{fig:num_filters}. The performance of a single-layer
softmax model and a single-layer Maxout model (number of affine
components in each Maxout unit is two) is also provided for
comparison.

\begin{figure}[htbp]
\centering
\begin{minipage}[h]{0.39\linewidth}
\centering
{\small
\begin{tabular}{|c|c|c|c|}
\hline
\, $J_{r}$ \, & Accuracy & \, $J_{r}$ \, & Accuracy \\
\hline
10 & \hspace{0.2cm} 82.7\% \hspace{0.2cm} & 18 & \hspace{0.2cm} 90.4\% \hspace{0.2cm} \\
\hline
11 & \hspace{0.2cm} 86.4\% \hspace{0.2cm} & 19 & \hspace{0.2cm} 94.6\% \hspace{0.2cm} \\
\hline
12 & \hspace{0.2cm} 86.4\% \hspace{0.2cm} & 20 & \hspace{0.2cm} 94.7\% \hspace{0.2cm} \\
\hline
13 & \hspace{0.2cm} 89.9\% \hspace{0.2cm} & 21 & \hspace{0.2cm} 94.7\% \hspace{0.2cm} \\
\hline
14 & \hspace{0.2cm} 90.0\% \hspace{0.2cm} & 22 & \hspace{0.2cm} 94.8\% \hspace{0.2cm} \\
\hline
15 & \hspace{0.2cm} 90.2\% \hspace{0.2cm} & 23 & \hspace{0.2cm} 94.8\% \hspace{0.2cm} \\
\hline
16 & \hspace{0.2cm} 90.2\% \hspace{0.2cm} & 24 & \hspace{0.2cm} 95.7\% \hspace{0.2cm} \\
\hline
17 & \hspace{0.2cm} 90.3\% \hspace{0.2cm} & 25 & \hspace{0.2cm} 95.7\% \hspace{0.2cm} \\
\hline
\end{tabular}
}
\end{minipage}
\begin{minipage}[h]{0.5\linewidth}
\centering
\includegraphics[width=0.85\linewidth]{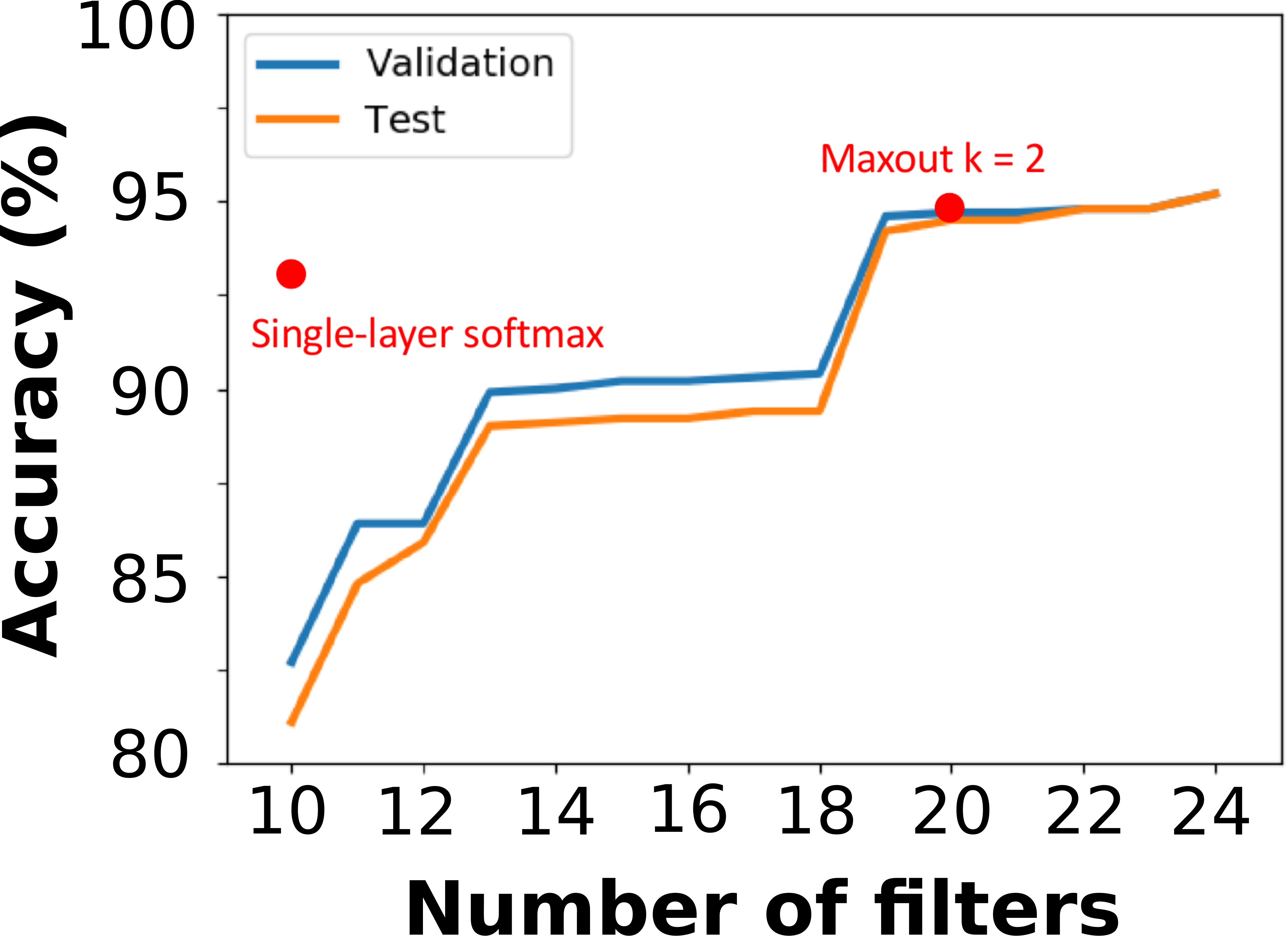}
\end{minipage}
\caption{Classification accuracy of pruned Max-plus model in function of the number of retained active filters.}
\label{fig:num_filters}
\end{figure}

As we can see on the diagram, the performance of the pruned Max-plus
model is quite inferior to that of a single-layer softmax model when
only one active filter is allowed to be selected for each class,
\textit{i.e.} the threshold is fixed to 1.0. However, as we relaxed
the constraint on the number of total active filters by decreasing the
threshold, the accuracy recovers rapidly and approaches that of the
unpruned Max-plus model in a monotonic way. With exactly 24 active
filters retained in the pruned model, we achieve a full-recovery of
the original Max-plus model performance, which means that the other
120 linear filters do not contribute to the classification
task. Moreover, we can achieve comparable performance as the 2-degree
Maxout model with roughly the same amount of parameters, which again
validates the effectiveness of our Max-plus models.

With the same method, we successfully performed model pruning on a
much more challenging CNN model by replacing the last fully-connected
layer with a Max-plus layer. In order to facilitate the training of
deep Max-plus model, we resort to transfer learning by initializing
the two convolutional layers with pre-trained weights. The pruned
Max-plus model achieves slightly better performance than the CNN model
while reducing $94.8\%$ of parameters of the second last
fully-connected layer and eliminating the last fully-connected layer
compared to the CNN model. Note that we could achieve a full-recovery
of the unpruned Max-plus model performance with only ten active
filters in this case, namely one linear filter for each output
unit. Table~\ref{tab:arch} summarizes the architectures of the CNN
model, the unpruned Max-plus model and the pruned Max-plus model,
along with their classification accuracy on the test set of CIFAR-10
dataset.

\begin{table}[htbp]
\centering
\caption{The architectural specifications of the CNN model, unpruned
  and pruned Max-plus model, along with their performance on the test
  set of CIFAR-10 dataset.}
\label{tab:arch}
\begin{tabular}{||c|c|c||}
\hline
\, CNN \, & \, Max-plus \, & \, Pruned Max-plus \, \\
\hline
conv(5*5) & conv(5*5) & conv(5*5) \\
\hline
\hspace{0.2cm} maxpool(2*2) \hspace{0.2cm} & \hspace{0.2cm} maxpool(2*2) \hspace{0.2cm} & \hspace{0.2cm} maxpool(2*2) \hspace{0.2cm} \\
\hline
conv(5*5) & conv(5*5) & conv(5*5) \\
\hline
maxpool(2*2) & maxpool(2*2) & maxpool(2*2) \\
\hline
fc(384) & fc(384) & fc(384) \\
\hline
fc(192) & fc(192) & fc(10) \\
\hline
fc(10) & maxplus(10) & maxplus(10) \\
\hline\hline
83.5\% & 83.9\% & 83.9\% \\
\hline
\end{tabular}
\end{table}


\subsection{Comparison to Maxout Networks}
\label{subsection43}

It is noticeable that the pruned Max-plus network differs from Maxout
networks only in their grouping strategy for maximum
operations. Maxout networks impose this rigid constraint in a way that
each Maxout unit has an equal number of affine components while
Max-plus networks are more tolerant in this
respect. Figure~\ref{fig:maxplus_maxout} shows a graphical
illustration of this comparison.

\begin{figure}[htbp]
\centering
\includegraphics[width=0.8\linewidth]{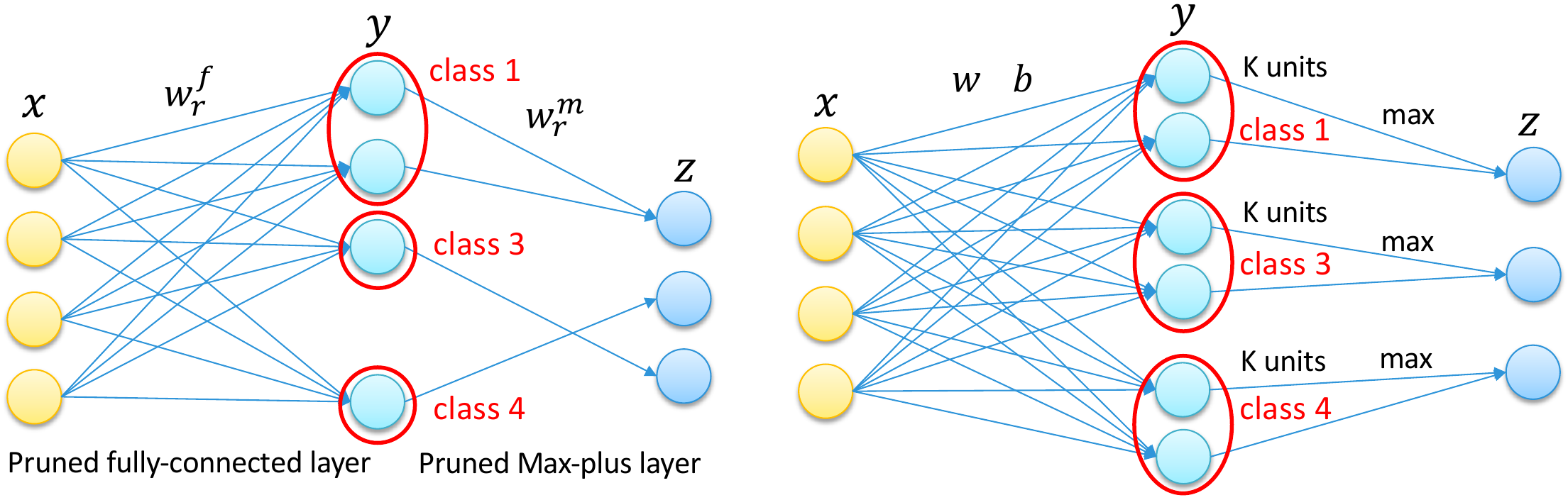}
\caption{Illustration of the comparison between the pruned Max-plus
  model (left) and Maxout model (right).}
\label{fig:maxplus_maxout}
\end{figure}

This higher flexibility hence endows Max-plus blocks with the
capability of adapting the number of active filters used for each
output unit accordingly. If a latent concept (say digit 1, which can
be easily confused with digit 7) is considerably tougher to capture
than some other concepts (say digits 3, 4 and 8, each of which has a
relatively unique shape among the ten Arabic numbers), then the
Max-plus layer will select more linear filters to abstract it than for
the others. For example, the partition of the 24 active filters for
the ten digit classes in Section~\ref{subsection42} is
$\left[2, 3, 3, 1, 2, 3, 3, 3, 2, 2\right]$, which is consistent with
our point. This adaptive behavior of the filter selection property of
Max-plus blocks makes the pruned Max-plus network more computationally
efficient (fewer model parameters, smaller run-time memory footprint
and faster inference) and is highly desirable in real-life
applications.


\section{Conclusions and Future Work}

In this work we went a step further on a very new and promising topic,
namely the reduction of deep neural networks with Max-plus blocks. Our
experiments show strong evidence that model pruning via this method is
compatible with high performance when a proper dropout regularization
is applied during training. This was tested on data and architectures
of variable complexity. Just as interesting as the obtained results
are the many questions raised by these new insights. In particular,
training these architectures is a challenging task which requires a
better understanding on them, both theoretical and practical. We
observed that training a deep model containing a Max-plus block is not
straightforward, as we needed to resort to transfer learning. New
optimization tricks will be needed to train deep architectures with
\emph{several} Max-plus blocks. The extension to a convolutional
version of Max-plus blocks is also an open question, which we hope can
be addressed based on the elements provided by this work.

\textbf{Acknowledgements.} 
This work was partially funded by a grant from Institut Mines Telecom.

\bibliographystyle{splncs04}
\bibliography{biblio}

\end{document}